\newtheorem{theorem}{Theorem}
\newtheorem{lemma}[theorem]{Lemma}
\newtheorem{proposition}[theorem]{Proposition}
\theoremstyle{definition}
\theoremstyle{remark}
\newtheorem{remark}[theorem]{Remark}
\newtheorem{hj-local}[theorem]{Local Hermite-Joubert Problem}
\newcommand{\Alt}{\operatorname{A}}
\newcommand{\GL}{\operatorname{GL}}
\newcommand{\PGL}{\operatorname{PGL}}
\newcommand{\trdeg}{{\rm trdeg}}
\newcommand{\ed}{{\rm ed}}
\newcommand{\bbZ}{{\mathbb Z}}
\newcommand{\Z}{{\mathbb Z}}
\newcommand{\Sym}{\operatorname{S}}
\renewcommand{\P}{{\mathbb P}}
\newcommand{\bbP}{{\mathbb P}}
\newcommand{\G}{\operatorname{G}}
\newcommand{\Spec}{{\rm Spec}}
\newcommand{\cchar}{{\rm char}}
\begin{document}

\keywords{Hermite's theorem, quintic polynomial, Galois cohomology, Tsen-Lang theorem, essential dimension}
\subjclass[2010]{12G05, 14G05}
%

\title[Hermite's theorem]{Hermite's theorem via Galois cohomology}

\author[Matthew Brassil and Zinovy Reichstein]{Matthew Brassil 
and Zinovy Reichstein}

\address
{Department of Mathematics \\
University of British Columbia \\
Vancouver
\\
CANADA}

\email{mbrassil@math.ubc.ca, reichst@math.ubc.ca}

\thanks
{Zinovy Reichstein was partially supported by
National Sciences and Engineering Research Council of
Canada Discovery grant 253424-2017.}

\begin{abstract} 
An 1861 theorem of Hermite asserts that for every field extension 
$E/F$ of degree $5$ there exists an element of $E$ whose 
minimal polynomial over $F$ is of the form $f(x) = x^5 + c_2 x^3 + c_4 x + c_5$ for some $c_2, c_4, c_5 \in F$.
We give a new proof of this theorem using techniques of Galois cohomology, 
under a mild assumption on $F$.
\end{abstract} 

\maketitle

\section{Introduction}
An 1861 theorem of Hermite asserts that 
for every field extension $E/F$ of degree $5$
there exists an element of $E$ whose minimal polynomial over $F$ is of the form
\[ \text{$f(x) = x^5 + c_2 x^3 + c_4 x + c_5$ for some $c_2, c_4, c_5 \in F$.} \]
Modern proofs of this result have been given by Coray~\cite{coray2} and
Kraft~\cite{kraft}. Coray's proof relies on techniques of arithmetic geometry, 
whereas Kraft's is based on representation theory, in the spirit of Hermite's original paper~\cite{hermite}.
The purpose of this note is to give yet another proof of the following variant of Hermite's theorem 
using techniques of Galois cohomology.

\begin{theorem}\label{thm.main}
Let $E/F$ be a field extension of degree $5$. Assume $F$ contains an algebraically closed field $k$.
Then there exists an element $a \in E$ whose minimal polynomial is of the form
$f(x) = x^5 + c_2 x^3 + c_4 x + c_5$ for some $c_2, c_4, c_5 \in F$.
\end{theorem}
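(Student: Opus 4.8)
The plan is to recast the statement as the existence of a rational point on a twisted form of the Clebsch cubic surface, and then to produce that point from the special geometry of this surface together with the Tsen--Lang theorem.

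First I would reformulate the Hermite condition using traces. If the characteristic polynomial of $a$ is $x^5 - e_1 x^4 + e_2 x^3 - e_3 x^2 + e_4 x - e_5$, then killing the coefficients of $x^4$ and $x^2$ amounts to $e_1 = e_3 = 0$. Now $e_1 = \tr_{E/F}(a)$, and on the hyperplane $e_1 = 0$ Newton's identities give $e_3 = \tfrac13 \tr_{E/F}(a^3)$, so (assuming $\cchar k \neq 3$, the excluded characteristics being handled separately) the problem becomes: find a generator $a$ of $E/F$ with $\tr_{E/F}(a) = \tr_{E/F}(a^3) = 0$. The subset $\{\tr_{E/F}(a) = 0\}$ is a $4$-dimensional $F$-subspace $W_E \subseteq E$ and $a \mapsto \tr_{E/F}(a^3)$ restricts to a cubic form on it, so after projectivizing I am looking for an $F$-point of the cubic surface $X = \{\tr(a^3)=0\} \subseteq \P(W_E) \cong \P^3_F$ lying off the proper closed locus of non-generators.

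I would then package this cohomologically. The degree-$5$ extension $E/F$ corresponds to a class in $H^1(F,\Sym_5)$, with $\Sym_5$ acting through the five $F$-embeddings of $E$. Under this dictionary $W_E$ is the twist of the standard $4$-dimensional representation $W$ of $\Sym_5$ by this class, and $\tr(a^3)$ is the twist of the essentially unique $\Sym_5$-invariant cubic form on $W$, namely the Clebsch form $\sum_{i=1}^5 x_i^3$ restricted to $\sum_i x_i = 0$. Hence $X$ is a twisted form of the Clebsch diagonal cubic, a del Pezzo surface of degree $3$, and the theorem is equivalent to the assertion that this twisted Clebsch surface carries an $F$-point. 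To make Tsen--Lang available I would first cut down the ground field: since $\ed_k(\Sym_5) = 2$, the class of $E/F$ descends to a subextension $E_0/F_0$ with $k \subseteq F_0 \subseteq F$ and $\trdeg_k F_0 \le 2$, and a Hermite generator of $E_0/F_0$ base-changes to one of $E/F$; thus I may assume $F$ is the function field of a surface over the algebraically closed field $k$, in particular a $C_2$-field.

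The main obstacle is this last step, producing the $F$-point on the twisted Clebsch surface $X$, because a general cubic surface over a $C_2$-field need not have one and Tsen--Lang applies directly only in the $C_1$ range. The plan is to exploit the rich geometry of the Clebsch cubic. Its $27$ lines split under $\Sym_5$ into orbits of sizes $15$ and $12$, and its $10$ Eckardt points form a single $\Sym_5$-orbit; these, together with the degree-$3$ closed points obtained by intersecting $X$ with an $F$-line in $\P^3_F$, furnish closed points of degrees with greatest common divisor $1$, so that $X$ has index one. I would then leverage the description $\Sym_5 \cong \PGL_2(\bbF_5)$, under which $X$ is the twist of the blow-up of $\P^2$ at the six points of $\P^1(\bbF_5)$, to relate $X(F)$ to the vanishing of a $3$-torsion Brauer class, and combine the index-one property with the $C_r$-property of $F$ coming from $k$ (Tsen--Lang) to force $X(F) \neq \emptyset$. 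A final perturbation within $X(F)$ moves the point off the non-generating locus, yielding a generator $a$ of $E/F$ whose minimal polynomial has the desired form. Establishing the existence of the rational point is the delicate heart of the argument.
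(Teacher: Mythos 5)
Your setup is sound and in fact coincides with the paper's: the trace reformulation produces the same cubic surface $X$ (the paper cuts it out by $c_1=c_3=0$ in $\P^4$), and the descent to $\trdeg_k F_0 \leqslant 2$ via $\ed(\Sym_5)=2$ is a correct application of essential dimension. The genuine gap is the last step, which you yourself flag as ``the delicate heart'': no known theorem turns your ingredients into an $F$-point. Over a $C_2$-field Tsen--Lang says nothing about a cubic surface ($3^2 = 9 > 4$ variables), and ``index one plus $C_2$'' is not a sufficient criterion either: whether a smooth cubic surface of index $1$ has a rational point is a well-known open problem over general fields, and it is not resolved by the $C_2$ hypothesis. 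The best available result in this direction, Coray's theorem \cite{coray1}, only converts index $1$ into a closed point of degree $1$, $4$ or $10$. So the sentence ``combine the index-one property with the $C_r$-property \dots to force $X(F)\neq\emptyset$'' is not a proof step but a restatement of the unsolved difficulty.

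Your Brauer-class idea points in a workable direction, but not as stated, and the obstruction is instructive. To relate $X(F)$ to a $3$-torsion Brauer class you need to blow down a Galois-stable six of pairwise skew lines over $F$, obtaining a Severi--Brauer surface whose class is then killed by your prime-to-$3$ closed points. But under the full $\Sym_5$-twist no stable six exists: the $\Sym_5$-orbits on the $27$ lines have sizes $15$ and $12$, and odd permutations interchange the two sixes of the double-six formed by the $12$ (equivalently, $\Sym_5$ does not embed in $\PGL_3$, so the blow-up model of the Clebsch cubic carries only the $\Alt_5$-symmetry -- your identification of $X$ with a twist of the blow-up of $\P^2$ at $\P^1(\bbF_5)$ is valid only for cocycles valued in $\Alt_5$). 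Repairing this forces exactly the two moves your proposal omits and which constitute the paper's actual mechanism: first pass to the discriminant quadratic extension so that the Galois image lies in $\Alt_5$, and then descend the resulting point back down using Coray's theorem \cite[Proposition 2.2]{coray2} that a cubic surface with a point over a quadratic extension has an $F$-point (Lemma~\ref{lem3}). The paper exploits this lever differently: since quadratic (more generally, constructible) extensions are harmless for cubic-surface points, it does not stop at $\ed \leqslant 2$ but lowers the essential dimension to $1$ -- killing the discriminant, embedding $\Alt_5 \hookrightarrow \PGL_2(k)$, and splitting the resulting quaternion obstruction by one more quadratic extension so the class lifts to the rank-one-center group $G \subset \GL_2$ with $\ed(G)=1$ -- after which Tsen--Lang applies directly in the $C_1$ range, where every cubic surface automatically has a point.
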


\section{A geometric restatement of the problem}

Clearly every $z \in E \setminus F$ is a primitive element for $E/F$. Choose one such $z$
and set $a = x_0 + x_1 z + \dots + x_4 z^4 \in E$,
where $x_0, \dots, x_4 \in F$ are to be specified later.
The characteristic polynomial of $a$ over $F$ is $f(t) = \det(t \cdot 1_F - a) = t^5 + c_1 t^4 + \dots c_4 t + c_5$,
where $t$ is a commuting variable, and $\det$ denotes the norm in the field extension
$E(t)/F(t)$. Each $c_i$ is a homogeneous polynomial of degree $i$
in $x_0, \dots, x_4$ with coefficients in $F$. We are interested in non-trivial solutions of the system
\begin{equation} \label{e.c}
c_1(x_0, \dots, x_4) = c_3(x_0, \dots, x_4) = 0
\end{equation}
in $\bbP^4(F)$. Note that $c_1$ cuts out a linear 
subvariety $\P^3 \subset \bbP^4$, and $c_3$ cuts out a cubic surface in this $\P^3$ defined over $F$.
We will denote this cubic surface by $X$. 
Any solution $(x_0: \ldots : x_4) \in \P^4(F)$ to~\eqref{e.c} (or equivalently, any $F$-point of $X$)
gives rise to an element $a = x_0 + x_1 z + \ldots + x_4 z^4 \in E$ whose characteristic polynomial is 
of the desired form. Moreover, if $(x_0: \ldots: x_4) \neq (1: 0: \ldots: 0)$, then $a$ is a primitive 
element of $E$, so its minimal polynomial is the same as its characteristic polynomial.

\begin{lemma} \label{lem1} (a) $(1:0:0:0:0)$ is a solution to~\eqref{e.c} if and only if $\cchar(F) = 5$.

\smallskip
(b) In the course of proving Theorem~\ref{thm.main}, we may assume without loss of generality that $\cchar(F) \neq 5$.
In particular, we may assume that $E/F$ is separable.

\smallskip
(c) In order to prove Theorem~\ref{thm.main}, it suffices to show that
the system~\eqref{e.c} has a non-trivial solution in $F$ (or equivalently, $X$ has an $F$-point).
\end{lemma}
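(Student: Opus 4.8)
The plan is to treat the three parts in the order (a), (b), (c), since (c) relies on both (a) and the reduction furnished by (b).

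For part (a), I would specialize to the point $(x_0:\ldots:x_4) = (1:0:0:0:0)$, which corresponds to the element $a = 1 \in F \subseteq E$. Since $1$ acts as the identity on the $5$-dimensional $F$-vector space $E$, its characteristic polynomial over $F$ is $(t-1)^5 = t^5 - 5t^4 + 10t^3 - 10t^2 + 5t - 1$. Reading off the relevant coefficients gives $c_1(1,0,0,0,0) = -5$ and $c_3(1,0,0,0,0) = -10$. The system~\eqref{e.c} is satisfied at this point exactly when $-5 = -10 = 0$ in $F$; as $-10 = 2\cdot(-5)$, this is equivalent to $\cchar(F) = 5$, which is the claim.

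For part (b), the implication $\cchar(F) \neq 5 \Rightarrow E/F$ separable is immediate, since the inseparable degree of $E/F$ is a power of $p = \cchar(F)$ dividing $[E:F] = 5$, hence trivial unless $p = 5$; this is the ``in particular.'' It therefore remains to prove Theorem~\ref{thm.main} outright when $\cchar(F) = 5$, after which we are free to assume $\cchar(F) \neq 5$. If $E/F$ is inseparable then, $5$ being prime, it is purely inseparable, so $E = F(a)$ with $a^5 = b \in F$; the minimal polynomial $t^5 - b$ already has the required shape ($c_2 = c_4 = 0$), and we are done. If instead $E/F$ is separable with $\cchar(F) = 5$, part (a) tells us that $P_0 = (1:0:\ldots:0)$ lies on $X$, so $X$ carries an $F$-point; the task is to upgrade it to an $F$-point \emph{distinct} from $P_0$, which then yields a primitive element. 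Here I would exploit that $F \supseteq k$ is infinite together with the geometry of the cubic surface $X$ at $P_0$: if $X$ is smooth at $P_0$, intersecting $X$ with the tangent plane at $P_0$ produces a plane cubic with a double point at $P_0$, hence a rational curve supplying infinitely many $F$-points of $X$; if $X$ is singular at $P_0$, projection from $P_0$ is birational onto $\bbP^2$ and again yields infinitely many $F$-points. In either case all but finitely many of these points avoid $P_0$ and hence are primitive.

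For part (c), I would invoke (b) to reduce to $\cchar(F) \neq 5$; then by (a) the point $P_0$ does not lie on $X$. Consequently every $F$-point $(x_0:\ldots:x_4)$ of $X$ automatically differs from $P_0$, so the associated $a = x_0 + x_1 z + \ldots + x_4 z^4$ is a primitive element of $E$ whose minimal polynomial equals its characteristic polynomial $t^5 + c_2 t^3 + c_4 t + c_5$. Thus producing a single $F$-point of $X$ completes the proof. The main obstacle is the separable $\cchar(F) = 5$ subcase of (b): confirming that the rational point $P_0$ can be promoted to a primitive one — via smoothness at $P_0$ or projection from it — is the only step requiring genuine geometric input, everything else being bookkeeping.
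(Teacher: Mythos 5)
Your parts (a) and (c) are correct and essentially identical to the paper's proof, and your observation in (b) that the purely inseparable case is trivial (every $a \in E \setminus F$ has minimal polynomial $t^5 - a^5$, which already has the required shape) is a clean shortcut the paper does not even need to state, since its argument treats $\cchar(F)=5$ uniformly. The genuine gap is in your handling of the remaining subcase of (b): $\cchar(F) = 5$ with $E/F$ separable, where you must promote the single point $P_0 = (1:0:\dots:0)$ of $X$ to a second $F$-point. Your tangent-plane trick is incomplete. The section $X \cap T_{P_0}X$ is indeed a plane cubic singular at $P_0$, but it need not be an irreducible cubic with a double point: for a smooth cubic surface it can also be a conic plus a line, or three lines. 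The fatal case is when it is three distinct lines all passing through $P_0$ (i.e.\ $P_0$ is an Eckardt point of $X$): these lines can be permuted nontrivially by $\Gal(F^{\rm sep}/F)$, so none of them need be defined over $F$, and projection from $P_0$ then produces no $F$-rational curve and no new $F$-points. This degenerate configuration is precisely what makes ``a smooth cubic surface with a rational point is unirational'' a real theorem rather than an exercise, and it is why the paper instead invokes \cite[Theorem 1.1]{kollar}, which yields unirationality of $X$ over $F$ from the single point $P_0$ in every characteristic, and then concludes by density of rational points on unirational varieties over the infinite field $F$.

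Two smaller points. First, the paper quotes \cite[Lemma 1.2]{coray2} to the effect that $X$ is \emph{smooth} whenever $\cchar(F) \neq 3$; so in characteristic $5$ your ``$X$ singular at $P_0$'' branch never occurs. Second, that branch as you wrote it also has a defect: projection from a singular $F$-point is birational onto $\bbP^2$ only when the point is a double point; if $P_0$ were a triple point, $X$ would be a cone over a plane cubic curve and the projection would map onto that curve, which may have no $F$-points besides the images of the vertex. So to make your route rigorous you would need both the smoothness input from \cite{coray2} and an argument (or a citation such as \cite{kollar}) covering the Eckardt-point case; as it stands, the argument does not close.
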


\begin{proof} (a) If $x_0 = 1$ and $x_1 = \dots = x_4 = 0$, then $a = 1$, and
\[ \det(t \cdot 1_F - a) = (t-1)^5 = t^5 - 5 t^4 + 10 t^3 - 10 t^2 + 5 t - 1 , \]
so $c_1 = -5$ and $c_3 = -10$. Thus $c_1 = c_3 = 0$ if and only if $\cchar(F) = 5$.

(b) An easy application of the Jacobian criterion, shows that
the surface $X$ is smooth whenever $\cchar(k) \neq 3$; see \cite[Lemma 1.2]{coray2}.

Assume $\cchar(F) = 5$. By part (a), $(1: 0: \dots: 0)$ is an $F$-point of $X$.
Consequently, by~\cite[Theorem 1.1]{kollar}, $X$ is unirational over $F$. 
Since $F$ is an infinite field (recall that we are assuming that $F$ contains an algebraically closed field),
this tells us that $F$-points are dense in $X$. In particular, there is an $F$-point on $X$, 
other than $(1:0:0:0:0)$, as desired.

(c) By part (b), we may assume that $\cchar(k) \neq 5$. By part (a), $(1:0 :0:0:0)$ is not 
a solution to~\eqref{e.c}. Thus any solution gives rise to $a = x_0 + x_1 z + \ldots + x_4 z^4 \in E$ 
whose minimal polynomial has the desired form.
\end{proof}

\section{Preliminaries on Galois cohomology and essential dimension}

In this section we give a brief summary of the background material on Galois cohomology and essential dimension, which will be used in the sequel.
For details we refer the reader to~\cite{serre-gc}, \cite[Chapter I]{serre-ci}, \cite{berhuy-favi1}, and \cite{icm}. 

\smallskip
\begin{itemize} 
\item
Let $G$ be a smooth algebraic group over $k$ and $F$ be a field. The Galois cohomology set $H^1(F, G)$ is in a natural bijective correspondence
with isomorphism classes of $G$-torsors $T \to \Spec(F)$. The class of the split torsor $G \times_{\Spec(k)} \Spec(F) \to \Spec(F)$ is usually denoted by $1 \in H^1(F, G)$. 

\smallskip
\item
In the case where $G$ is the symmetric group $\Sym_n$ (viewed as a constant finite group over $k$), the Galois cohomology set $H^1(F, \Sym_n)$ 
is also in a natural bijective correspondence with isomorphism classes of $n$-dimensional \'etale algebras $E/F$. Recall that an \'etale algebra $E$ is,
by definition, is a direct product of the form $E = E_1 \times \ldots \times E_r$, where each $E_i/F$ is a finite separable field extension.

\smallskip
\item
In particular, a separable field extension $E/F$ of degree $n$ gives rise to a class in $H^1(F, \Sym_n)$.
This class lies in the image of the natural map $H^1(F, G) \to H^1(F, \Sym_n)$, for a subgroup $G$ of $\Sym_n$ if and
only if the Galois group of $E/F$ is contained in $G$.

\smallskip
\item
Let $E/F$ be a finite field extension, and $k \subset F$ be a subfield. We say that $E/F$ descends to an intermediate extension $k \subset F_0 \subset F$ if
$E = E_0 \otimes_{F_0} F$ for some field extension $E_0/F_0$. The essential dimension $\ed(E/F)$ is the minimal transcendence degree $\trdeg_k(F_0)$ such that
$E/F$ descends to $F_0$. This number depends on the base field $k$, which we assume to be fixed throughout.

\smallskip
\item
If $G$ is an algebraic group over $k$, then the essential dimension $\ed(\tau)$ of a $G$-torsor $\tau \colon T \to \Spec(F)$ is defined in a similar manner.
We say that $\tau$ descends to a subfield $F_0 \subset F$ if it lies in the image of the natural map $H^1(F_0, G) \to H^1(F, G)$. The essential dimension $\ed(\tau)$
is the minimal transcendence degree of an intermediate extension $k \subset F_0 \subset F$ such that $\tau$ descends to $F_0$.  In particular, if $E/F$ is a separable field extension
of degree $n$, and $\tau \in H^1(F, \Sym_n)$ is the class corresponding to $E/F$, then $\ed(\tau) = \ed(E/F)$.

\smallskip
\item
The essential dimension $\ed(G)$ of the group $G$ is the maximal value of $\ed(\tau)$ over all extensions $F/k$ and all $G$-torsors 
$\tau$ over $\Spec(F)$. In the case where $G$ is the symmetric group $\Sym_n$, $\ed(\Sym_n)$ is the maximal value of $\ed(E/F)$, 
where $E/F$ ranges over all separable field extensions of degree $n$.
\end{itemize}

\section{First steps towards the proof of the main theorem}

Recall that a field $F$ has property $C_i$ (or $F$ is a $C_i$-field, for short) if every system 
\[ p_1(x_1, \dots, x_n) = \ldots = p_r(x_1, \dots, x_n) = 0 \]
of homogeneous polynomials
has a non-trivial solution, provided that ${\rm deg}(p_1)^i + \dots + {\rm deg}(p_r)^i < n$;
see~\cite[Definition 5.1.1]{pfister}.

\begin{lemma} \label{lem2} (a) Theorem~\ref{thm.main} holds if $\trdeg_k(F) \leqslant 1$.

\smallskip
(b) More generally, Theorem~\ref{thm.main} holds if $\ed(E/F) \leqslant 1$.
\end{lemma}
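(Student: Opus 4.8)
The plan is to reduce both parts to the numerical criterion supplied by the $C_1$ property, and then, for part (b), to propagate the conclusion from a subfield of small transcendence degree back up to $F$ by base change.

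For part (a), I would first apply Lemma~\ref{lem1}(b) to assume $\cchar(F) \neq 5$, so that by Lemma~\ref{lem1}(c) it suffices to exhibit a non-trivial solution of the system~\eqref{e.c}. Since $k$ is algebraically closed and $\trdeg_k(F) \leqslant 1$, the Tsen--Lang theorem shows that $F$ is a $C_1$-field. The system~\eqref{e.c} consists of the two forms $c_1$ and $c_3$, of degrees $1$ and $3$, in the $n = 5$ variables $x_0, \dots, x_4$, and $\deg(c_1) + \deg(c_3) = 1 + 3 = 4 < 5 = n$. Hence the defining inequality for $C_1$ is met, the system has a non-trivial solution in $F$, and Lemma~\ref{lem1}(c) yields Theorem~\ref{thm.main} in this case.

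For part (b), I would unwind the hypothesis $\ed(E/F) \leqslant 1$: by the definition of essential dimension of a field extension, $E/F$ descends to an intermediate field $k \subset F_0 \subset F$ with $\trdeg_k(F_0) \leqslant 1$, so that $E \cong E_0 \otimes_{F_0} F$ for some field extension $E_0/F_0$. Comparing dimensions over $F$ gives $[E_0 : F_0] = [E : F] = 5$. Since $F_0$ contains $k$ and satisfies $\trdeg_k(F_0) \leqslant 1$, part (a) applies to $E_0/F_0$ and produces an element $a_0 \in E_0$ whose minimal polynomial over $F_0$ has the form $f_0(x) = x^5 + c_2 x^3 + c_4 x + c_5$ with $c_2, c_4, c_5 \in F_0 \subset F$.

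It remains to transfer $a_0$ to $E$, and I expect this to be the only step needing genuine care. Setting $a = a_0 \otimes 1 \in E$, I would observe that $\deg(f_0) = 5 = [E_0 : F_0]$ forces $a_0$ to be a primitive element, whence $E_0 \cong F_0[x]/(f_0)$ and $E \cong (F_0[x]/(f_0)) \otimes_{F_0} F \cong F[x]/(f_0)$. Because $E$ is a field, $f_0$ is irreducible over $F$, so $f_0$ is exactly the minimal polynomial of $a$ over $F$; it has the required shape, which completes part (b). The rest of the argument is entirely formal, the substantive input being the observation that $1 + 3 < 5$ places the system~\eqref{e.c} within reach of the $C_1$ property.
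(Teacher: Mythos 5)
Your proof is correct and follows essentially the same route as the paper: part (a) via the Tsen--Lang theorem and the $C_1$ property (you apply the inequality $1+3<5$ to the system~\eqref{e.c} directly, while the paper applies $3<4$ to the cubic surface $X \subset \P^3$, a cosmetic difference), and part (b) by descending to $F_0$ with $\trdeg_k(F_0) \leqslant 1$ and transferring the element back up. Your careful verification that the minimal polynomial of $a_0$ over $F_0$ remains the minimal polynomial over $F$ is exactly the content of the paper's one-line assertion that these polynomials coincide, so the two arguments match.
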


\begin{proof} (a) By the Tsen-Lang Theorem~\cite[Corollary 5.1.5]{pfister}, $F$ is a $C_1$-field. In particular, every cubic surface in $\P^3$ defined over $F$ has an $F$-point.
Now apply Lemma~\ref{lem1}(c).

(b) By the definition of $\ed(E/F)$, there exists an intermediate field $k \subset F_0 \subset F$ and a field extension $E_0/F_0$ of degree $5$ such that
$\trdeg_k(F_0) \leqslant 1$ and $E = E_0 \otimes_{F_0} F$. Note that for any $a \in E_0$, the minimal polynomials of $a$ over $F$ and $F_0$ coincide.
Thus we may replace $E/F$ by $E_0/F_0$ and look for an element $a$ with desired properties in $E_0$. (b) now follows from (a).
\end{proof}

Recall that a field extension $F'/F$ is called constructible if there exists a tower of fields 
\[ F = F_0 \subset F_1 \subset \dots \subset F_n = F' ,\]
such that $[F_{i}: F_{i-1}] = 2$ for every $i = 1, \dots, n$.\footnote{The term ``constructible" is related to the classical theory of ruler and compass constructions,
where towers of quadratic field extensions appear naturally; see, e.g.,~\cite[Section 13.4]{artin}.}

\begin{lemma} \label{lem3} Let $F'/F$ be a field extension and $E' = E \otimes_F F'$. 

(a) Assume $[F':F] = 2$. If Theorem~\ref{thm.main} holds for $E'/F'$, then it holds for $E/F$.

(b) Assume $F'/F$ is constructible.  If Theorem~\ref{thm.main} holds for $E'/F'$, then it holds for $E/F$.
\end{lemma}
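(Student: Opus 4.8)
The plan is to derive (b) from (a) by a short induction and to prove (a) by a classical residual-intersection argument on the cubic surface $X$.

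For (b), choose a tower $F = F_0 \subset F_1 \subset \dots \subset F_n = F'$ with $[F_i : F_{i-1}] = 2$ and set $E_i = E \otimes_F F_i$. By transitivity of tensor products, $E_i = E_{i-1} \otimes_{F_{i-1}} F_i$, so part (a), applied to the quadratic extension $F_i / F_{i-1}$, lets me descend the conclusion of Theorem~\ref{thm.main} from $E_i/F_i$ to $E_{i-1}/F_{i-1}$. Starting from the hypothesis that Theorem~\ref{thm.main} holds for $E_n/F_n = E'/F'$ and applying this $n$ times, I obtain Theorem~\ref{thm.main} for $E_0/F_0 = E/F$.

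For (a), I first invoke Lemma~\ref{lem1}(b) to assume $\cchar(F) \neq 5$, and then Lemma~\ref{lem1}(c) to reduce everything to producing an $F$-point on the cubic surface $X \subset \P^3$. Since $E' = E \otimes_F F'$, the cubic surface attached to $E'/F'$ is precisely the base change $X_{F'} = X \times_F \Spec(F')$, and the hypothesis that Theorem~\ref{thm.main} holds for $E'/F'$ yields, via Lemma~\ref{lem1}(c) over $F'$, an $F'$-point of $X_{F'}$, i.e. a morphism $\Spec(F') \to X$. Let $x$ be its image, a closed point of $X$; then $\kappa(x)$ embeds in $F'$, so $[\kappa(x):F] \in \{1,2\}$. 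If it equals $1$, then $x \in X(F)$ and we are done.

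Suppose $[\kappa(x):F] = 2$. Writing $F' = F \oplus F\theta$ and decomposing the homogeneous coordinates of $x$ as $v = v_0 + v_1 \theta$ with $v_0, v_1 \in F^4$, an $F$-rational hyperplane contains $x$ if and only if it contains both $v_0$ and $v_1$; since $x$ is not $F$-rational, $v_0$ and $v_1$ are $F$-linearly independent, so the linear span $L = \P(\Span_F(v_0,v_1))$ of $x$ in $\P^3$ is a line defined over $F$ and isomorphic to $\P^1_F$. I then intersect $L$ with $X$. If $L \subseteq X$, then $X(F) \supseteq L(F) \neq \emptyset$ since $F$ is infinite. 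Otherwise the cubic form cutting out $X$ restricts to a nonzero degree-$3$ form on $L \cong \P^1_F$, so $L \cap X$ is an effective divisor of degree $3$ defined over $F$ containing the reduced degree-$2$ point $x$; the residual divisor $Q := (L \cap X) - x$ is then effective of degree $1$, hence an $F$-point of $X$. In either case $X(F) \neq \emptyset$, and because $\cchar(F) \neq 5$ the point produced cannot be $(1:0:0:0:0)$, so Lemma~\ref{lem1}(c) completes the proof.

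The one genuinely delicate point is keeping the residual-intersection argument characteristic-free: in characteristic $2$ the quadratic extension $F'/F$ may be inseparable, so $x$ need not split into two distinct geometric points and the naive ``line through two conjugate points'' is unavailable. The span computation via $v = v_0 + v_1 \theta$ circumvents this, producing the line $L$ uniformly. I would also record the short local length computation that $x$ appears in $L \cap X$ with multiplicity exactly $1$ — so that $Q$ genuinely has degree $1$ — which uses only that $x$ is reduced of degree $2$ and that $\deg(L \cap X) = 3$.
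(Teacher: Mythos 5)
Your proof is correct, and it diverges from the paper's in one substantive way: the paper disposes of part (a) in a single line by citing \cite[Proposition 2.2]{coray2} --- the statement that the cubic surface $X$ has an $F$-point if and only if it has an $F'$-point when $[F':F]=2$ --- and then obtains (b) by recursion, exactly as you do. What you have done is reprove the quadratic case of Coray's proposition from scratch: the degree-$2$ image point $x$ spans an $F$-rational line $L = \P(\Span_F(v_0,v_1))$, and either $L \subseteq X$ (in which case $L \cong \P^1_F$ has $F$-points automatically; invoking that $F$ is infinite is unnecessary here), or the degree-$3$ divisor $L \cap X$ on $L$ dominates the degree-$2$ divisor $[x]$, leaving an effective residual divisor of degree $1$, i.e.\ an $F$-point of $X$. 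This secant-line argument is the classical proof of the fact the paper outsources, and your use of the span rather than a pair of conjugate points is exactly what makes it work for inseparable quadratic extensions in characteristic $2$. Two minor points. First, you only need the divisor inequality $L \cap X \geqslant [x]$, which holds because the reduced point $\Spec \kappa(x)$ is a closed subscheme of both $L$ and $X$; the promised ``multiplicity exactly one'' computation is superfluous (though true for degree reasons). Second, the step ``Theorem~\ref{thm.main} for $E'/F'$ yields an $F'$-point of $X_{F'}$'' is really the \emph{converse} of Lemma~\ref{lem1}(c): it follows from the discussion opening Section 2 (an element with irreducible quintic minimal polynomial of the prescribed shape is primitive, its characteristic polynomial equals its minimal polynomial, so its coordinates satisfy $c_1 = c_3 = 0$), and since Lemma~\ref{lem1}(c) as stated goes in the opposite direction, this deserves its one-line justification --- a gap the paper's own terse proof shares. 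The trade-off between the two routes: the paper's citation is shorter, while your version is self-contained and makes visible that only elementary intersection theory on $\P^1_F$ is needed for quadratic descent, not the stronger prime-to-$3$ results of Coray's paper.
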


\begin{proof} (a) By~\cite[Proposition 2.2]{coray2}, our cubic surface $X$ has an $F'$-point if and only if $X$ has an $F$-point.
The desired conclusion now follows from Lemma~\ref{lem1}(c). To prove (b), apply (a) recursively.
\end{proof}

Note that $\ed(\Sym_5) = 2$ if $\cchar(k) \neq 2$; see~\cite[Theorem 6.5]{br}. This means that for some separable extensions $E/F$ of degree $5$, $\ed(E/F) = 2$
and consequently Lemma~\ref{lem2}(b) does not apply. However, in view of Lemma~\ref{lem3} it suffices to prove the following.

\begin{proposition} \label{prop1} For every separable field extension $E/F$ of degree $5$, there exists a constructible extension $F'/F$ such that
$\ed(E'/F') \leqslant 1$. Here $E' = E \otimes_F F'$.
\end{proposition}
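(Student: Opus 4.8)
The plan is to isolate the obstruction to $\ed(E/F)\leqslant 1$ at the prime $2$ and then remove it by a constructible (hence $2$-power) extension. The class of $E/F$ lives in $H^1(F,\Sym_5)$, and although $\ed(\Sym_5)=2$, this value is forced entirely by the prime $2$: the Sylow $3$- and $5$-subgroups of $\Sym_5$ are the cyclic groups $\Z/3$ and $\Z/5$, while a Sylow $2$-subgroup is the dihedral group of order $8$, whose essential dimension is $2$. I would therefore try to produce a constructible $F'$ over which the $2$-primary contribution to the essential dimension of $E'=E\otimes_F F'$ is trivialized, leaving a torsor whose essential dimension is controlled by the odd primes alone, and hence is at most $1$.

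Concretely, I would proceed in two constructible steps, guided by the mod-$2$ cohomological invariants $w_1,w_2$ of the trace form $\tr_{E/F}$, which (following Serre) generate the mod-$2$ invariants of degree-$5$ \'etale algebras. First adjoin $\sqrt{\mathrm{disc}(E/F)}$; over the resulting quadratic extension $F_1$ the invariant $w_1$ vanishes and the class of $E_1=E\otimes_F F_1$ is refined to an $\Alt_5$-torsor. This is the only structure-group reduction a constructible extension can achieve, since a short check shows that $\Alt_5$ is the unique proper subgroup of $\Sym_5$ of $2$-power index. The remaining obstruction is the Hasse--Witt class $w_2\in H^2(F_1,\Z/2)={}_2\mathrm{Br}(F_1)$, a sum of quaternion algebras; being $2$-torsion it is split by a finite extension of $2$-power degree, and I would take $F'/F_1$ to be such a constructible splitting field. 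Over $F'$ both $w_1$ and $w_2$ vanish, so the only surviving cohomological information is of odd-primary type.

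The main obstacle is the final step: deducing $\ed(E'/F')\leqslant 1$ from the vanishing of $w_1$ and $w_2$. This cannot be done by reducing the structure group further --- a group of essential dimension $\leqslant 1$ compatible with a degree-$5$ field extension must be $\Z/5$ or the dihedral group of order $10$, and the corresponding resolvents have degree $12$ or $6$, divisible by $3$; since $\Alt_5$ has no proper subgroup of $2$-power index, such a resolvent can never acquire a rational point over a $2$-power extension. The essential-dimension drop must therefore be a genuine descent rather than a reduction of the Galois group: one has to exhibit the $\Alt_5$-torsor over $F'$, with its quaternionic invariant now trivial, as defined over a subfield of transcendence degree $1$ over $k$. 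Establishing this upper bound --- equivalently, compressing the quintic to a one-parameter family once the obstruction $w_2$ is killed --- is the heart of the argument, and once it is in place Proposition~\ref{prop1} follows, feeding into Lemma~\ref{lem2}(b) and Lemma~\ref{lem3} to yield Theorem~\ref{thm.main}.
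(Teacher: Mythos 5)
Your first two reduction steps track the paper's proof closely: adjoin the square root of the discriminant to reduce the structure group to $\Alt_5$, then split a $2$-torsion quaternionic obstruction by a further constructible extension. But what you yourself call the ``heart of the argument'' --- deducing $\ed(E'/F')\leqslant 1$ once that obstruction vanishes --- is exactly what you have not supplied, and it does not follow from any prime-by-prime heuristic. Essential dimension does not localize the way your first paragraph suggests: killing all mod-$2$ invariants of a torsor does not bound its absolute essential dimension by the essential dimensions at odd primes. Moreover, the strongest structure-group consequence one could extract from the vanishing of $w_2$ (via Serre's theory of icosahedral extensions, which is presumably what you have in mind) is that the $\Alt_5$-torsor lifts to the binary icosahedral group $\widetilde{\Alt}_5\subset\operatorname{SL}_2(k)$; this still does not finish the proof, because $\widetilde{\Alt}_5$ is a finite group that is neither cyclic nor dihedral of order twice an odd integer, so its essential dimension is $2$, not $1$. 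Thus even a completed version of your plan, carried out with finite structure groups, stalls at $\ed \leqslant 2$.

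The missing mechanism in the paper is to \emph{enlarge} the structure group to a positive-dimensional algebraic group rather than shrink it to a finite one. Since $k$ is algebraically closed, $\Alt_5$ embeds into $\PGL_2(k)$; let $G\subset\GL_2$ be its preimage, a central extension $1\to\G_m\to G\to\Alt_5\to 1$. The connecting map $\delta\colon H^1(F,\Alt_5)\to H^2(F,\G_m)$ factors through $H^1(F,\PGL_2)$, so $\delta(\alpha)$ is the class of a single quaternion algebra, split by one quadratic extension $F'/F$; over $F'$ the class $\alpha$ lifts to some $\beta\in H^1(F',G)$. Now $G$ is a one-dimensional algebraic group with a generically free two-dimensional representation, so $\ed(G)=1$ by the result of Berhuy and Favi (in essence, $\ed(G)\leqslant\dim V-\dim G$ for a generically free representation $V$), whence $\ed(E'/F')\leqslant\ed(\beta)\leqslant 1$. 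This lifting to $G$ --- where the central $\G_m$ is precisely what makes the essential dimension drop to $1$ --- is the key idea absent from your proposal; it is also exactly the point where the hypothesis that $F$ contains an algebraically closed field is used.
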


\begin{remark} Since $k$ is algebraically closed, it is easy to see that $\ed(E/F) \geqslant 1$ for every non-trivial 
finite field extension $E/F$ (with $E \neq F$).
Thus $\leqslant 1$ can be replaced by $= 1$ in both parts of Lemma~\ref{lem2} and in Proposition~\ref{prop1}. 
\end{remark}

\section{Conclusion of the proof of the main theorem}

In this section we will complete the proof of Theorem~\ref{thm.main} by establishing Proposition~\ref{prop1}.

Let $\alpha$ denote the class of the field extension $E/F$ in $H^1(F, \Sym_5)$.
Consider the exact sequence
\[  \xymatrix{  1 \ar@{->}[r] & \Alt_5  \ar@{->}[r] & \Sym_5 \ar@{->}[r]^{{\rm sign} \; \; \;} & \Z /2 \Z  \ar@{->}[r] & 1}, \]
and the associated sequence 
\[  \xymatrix{  H^1(F, \Alt_5) \ar@{->}[r] & H^1(F, \Sym_5)  \ar@{->}[r]^{D \; \; } & H^1(F, \Z/ 2\Z)} \]
of Galois cohomology sets; cf.~\cite[Section 5.5]{serre-gc}. Here, as usual,  $\Alt_5$ denotes the alternating subgroup of $\Sym_5$. 
(If $\cchar(F) \neq 2$, $D(\alpha)$ is just the discriminant of $E/F$, viewed as
an element of $H^1(F, \Z/ 2 \Z) \simeq F^*/(F^*)^2$.)
The class $D(\alpha) \in H^1(F, \bbZ/2 \bbZ)$ is represented by a separable quadratic extension $F'/F$.
After replacing $F$ by $F'$, we may assume that $\alpha \in H^1(F, \Sym_5)$ lies in the image of $H^1(F, \Alt_5)$. Equivalently,
we may assume without loss of generality that the Galois group of $E/F$ is a subgroup $\Alt_5$.
 
Note that this reduction does not, by itself, allow us to conclude that $\ed(E/F) \leqslant 1$. Indeed, $\ed(\Alt_5) = 2$, 
assuming $\cchar(k) \neq 2$; see~\cite[Theorem 6.7]{br}. We will need to pass to a further constructible extension $F'/F$ 
in order to ensure that $\ed(E/F) \leqslant 1$.

For notational simplicity, we will continue to denote the class of $E/F$ in $H^1(F, \Alt_5)$ by $\alpha$. 
Since $k$ is algebraically closed, $\Alt_5$ can be embedded in $\PGL_2(k)$; see~\cite[p.19-04]{serre-A5}.\footnote{In the case where $\cchar(k) \neq 2, 3$ or $5$, see also~\cite[Proposition 1.1(3)]{beauville}.}
Let us now consider the commutative diagram
\[  \xymatrix{  1 \ar@{->}[r] & \G_m \ar@{->}[r] & \GL_2 \ar@{->}[r] & \PGL_2  \ar@{->}[r] & 1 \\ 
1 \ar@{->}[r] & \G_m \ar@{->}[r] \ar@{=}[u] & G \ar@{->}[r] \ar@{^{(}->}[u] & \Alt_5  \ar@{->}[r] \ar@{^{(}->}[u] & 1} \] 
of algebraic groups over $k$, where $G$ is the preimage of $\Alt_5$ in $\GL_2$. This diagram induces a commutative diagram 
\[  \xymatrix{ 
 & H^1(F, \PGL_2)  \ar@{->}[r]^{\delta} & H^2(F, \G_m) \\ 
H^1(F, G) \ar@{->}[r]^{\pi_F} 
& H^1(F, \Alt_5) \ar@{->}[r]^{\delta}  \ar@{->}[u] & H^2(F, \G_m) \ar@{=}[u]} \] 
of Galois cohomology sets, where the bottom row is exact; cf.~\cite[Section 5.5]{serre-gc}. Here $\delta$ denotes the connecting map.
The class of $\delta(\alpha)$ is represented by a quaternion algebra over $F$. This algebra can be split by a quadratic extension $F'/F$. After replacing
$F$ by $F'$, we may assume that $\delta(\alpha) = 0$. Equivalently, $\alpha = \pi_F(\beta)$ for some $\beta \in H^1(F, G)$. 

Since $\dim(G) = 1$ and the natural $2$-dimensional representation of $G$ is generically free (i.e., the stabilizer of a general point in trivial), one readily concludes that
$\ed(G) = 1$; see~\cite[Proposition 2.4]{berhuy-favi2}. Consequently, $\ed(\beta) \leqslant 1$ and thus $\ed(\alpha) \leqslant 1$.
This completes the proof of Proposition~\ref{prop1} and thus of Theorem~\ref{thm.main}. 
\qed

\begin{remark} The condition on $F$ in Theorem~\ref{thm.main} can be weakened slightly: our argument goes through, with only minor changes, under the assumption that
$F$ is a $p$-field for some prime $p \neq 3$ (not necessarily algebraically closed). Recall that a field $k$ is called a $p$-field 
if $[l: k]$ is a power of $p$ for every finite field extension $l/k$; see \cite[Definition 4.1.11]{pfister}.\footnote{Some authors use the terms ``$p$-closed field" or "$p$-special field" in place of "$p$-field".}
\end{remark}

\begin{remark} Proposition~\ref{prop1} fails for separable field extensions of degree $n \geqslant 6$. In fact, if $E/F$ is a general extension of degree $n$, then
for any constructible extension $F'/F$, 
\[ \ed(E'/F') \geqslant \ed(\Sym_n; 3) = \lfloor \frac{n}{3} \rfloor . \]
Here $\ed(\Sym_n; 3)$ denotes the essential dimension of $\Sym_n$ at $3$, $\lfloor \dfrac{n}{3} \rfloor$ denotes the integer part of $\dfrac{n}{3}$, and we are assuming that $\cchar(k) \neq 3$; see~\cite[Corollary 4.2]{mr}.
\end{remark} 

\begin{remark} Generalizing Hermite's theorem to field extensions of degree $n \geqslant 6$ is an interesting and largely open problem. 
The only known positive result in this direction is the classical theorem of Joubert~\cite{joubert} for $n = 6$. There are also negative 
results for some $n$. For an overview, see~\cite{brr}.
\end{remark}

\end{document}